\documentclass[a4, 10pt]{amsart}
\usepackage{amssymb}
\usepackage{amstext}
\usepackage{amsmath}
\usepackage{amscd}
\usepackage{amsthm}
\usepackage{amsfonts}
\usepackage{enumerate}
\usepackage{graphicx}
\usepackage{latexsym}

\theoremstyle{plain}
\newtheorem{thm}{Theorem}[section]
\newtheorem*{thm*}{Theorem}
\newtheorem*{cor*}{Corollary}
\newtheorem*{prop*}{Proposition}

\newtheorem{lem}[thm]{Lemma}

\newtheorem{cor}[thm]{Corollary}

\newtheorem*{claim*}{Claim}

\theoremstyle{definition}

\newtheorem{rem}[thm]{Remark}

\newtheorem*{conj*}{Conjecture}

\newtheorem*{ac}{{\sc Acknowledgments}}

\theoremstyle{remark}

\newtheorem*{tpf}{{\it Proof of Theorem \ref{main}}}

\def\Hom{\operatorname{Hom}}

\def\RHom{\operatorname{{\bf R}Hom}}

\def\p{\mathfrak p}

\def\Z{\Bbb Z}

\def\depth{\operatorname{depth}}

\def\Supp{\operatorname{Supp}}

\def\pd{\operatorname{pd}}
\def\id{\operatorname{id}}

\def\cpd{\operatorname{\text{$C$}-pd}}
\def\cid{\operatorname{\text{$C$}-id}}
\def\opd{\operatorname{\text{$\omega$}-pd}}
\def\oid{\operatorname{\text{$\omega$}-id}}

\def\A{{\mathcal A}}
\def\B{{\mathcal B}}
\def\D{{\mathcal D}}

\tolerance=9999

\begin{document}


\title{A generalization of a theorem of Foxby}
\author{Tokuji Araya}
\address{Nara University of Education, Takabatake-cho, Nara 630-8528, Japan}
\email{araya@math.okayama-u.ac.jp}
\author{Ryo Takahashi}
\address{Department of Mathematical Sciences, Faculty of Science, Shinshu University, 3-1-1 Asahi, Matsumoto, Nagano 390-8621, Japan}
\email{takahasi@math.shinshu-u.ac.jp}
\keywords{projective dimension, injective dimension, semidualizing module, Gorenstein ring}
\subjclass[2000]{13D05, 13C13, 13H10}
\begin{abstract}
In this paper, it is proved that a commutative noetherian local ring admitting a finitely generated module of finite projective and injective dimensions with respect to a semidualizing module is Gorenstein.
This result recovers a celebrated theorem of Foxby.
\end{abstract}
\maketitle
\section{Introduction}

A semidualizing module over a commutative noetherian ring is a common generalization of a free module of rank one and a dualizing module.
Pioneering studies of semidualizing modules were done by Foxby \cite{F0}, Golod \cite{Go} and Christensen \cite{Cp}.
A semidualizing module not only gives rise to several homological dimensions of modules, but also establishes an equivalence between categories of modules called Auslander and Bass categories.
So far a lot of authors have studied semidualizing modules from various points of view.

Let $R$ be a commutative noetherian ring, and let $C$ be a semidualizing $R$-module.
For a nonzero $R$-module $M$, the {\em $C$-projective dimension} $\cpd_RM$ of $M$ is defined to be the infimum of integers $n$ such that there exists an exact sequence 
$$
0 \to C\otimes_RP_n \to \cdots \to C\otimes_RP_1 \to C\otimes_RP_0 \to M \to 0
$$
of $R$-modules where $P_i$ is projective for $0\le i\le n$.
Dually, the {\em $C$-injective dimension} $\cid_RM$ of $M$ is defined to be the infimum of integers $n$ such that there exists an exact sequence 
$$
0 \to M \to \Hom_R(C,I^0) \to \Hom_R(C,I^1) \to \cdots \to \Hom_R(C,I^n) \to 0
$$
of $R$-modules where $I^i$ is injective for $0\le i\le n$.
The $C$-projective and $C$-injective dimensions of the zero module are defined as $-\infty$.

In the 1970s, Foxby \cite{F}, verifying a conjecture of Vasconcelos \cite{V}, proved that a commutative noetherian local ring is Gorenstein if it admits a nonzero finitely generated module of finite projective and injective dimensions.
As a natural generalization of this statement, Takahashi and White \cite{cdim} asked whether the same conclusion holds true even if ``projective'' and ``injective'' are replaced with ``$C$-projective'' and ``$C$-injective'' respectively.
Recently Sather-Wagstaff and Yassemi \cite{SY} answered that this question has an affirmative answer in the case where the $C$-projective dimension is equal to zero.
The main purpose of this paper is to give a complete answer to the question; we shall prove the following theorem.

\begin{thm}\label{main}
Let $R$ be a commutative noetherian ring, and let $C$ be a semidualizing $R$-module.
Let $M$ be a finitely generated $R$-module with $\cpd_RM<\infty$ and $\cid_RM<\infty$.
Then $R_\p$ is Gorenstein for every $\p\in\Supp_RM$.
\end{thm}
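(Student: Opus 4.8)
The plan is to localize, transport both hypotheses through Foxby equivalence, and thereby reduce the assertion to the case $\cpd_RM=0$ already settled by Sather-Wagstaff and Yassemi. First I would reduce to the local case: since $\cpd$ and $\cid$ localize, for $\p\in\Supp_RM$ we have $\cpd_{R_\p}M_\p<\infty$, $\cid_{R_\p}M_\p<\infty$ and $M_\p\ne0$, so after replacing $R$ by $R_\p$ it suffices to show that a local ring $(R,\m,k)$ admitting a nonzero finitely generated module $M$ with $\cpd_RM<\infty$ and $\cid_RM<\infty$ is Gorenstein.

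Next I would rewrite the hypotheses using the standard properties of the Auslander class $\A_C$ and Bass class $\B_C$. Finiteness of $\cpd_RM$ gives $M\in\B_C$, so that $N:=\Hom_R(C,M)$ satisfies $\pd_RN=\cpd_RM<\infty$ and $M\cong C\otimes_RN$; thus $N$ is a nonzero finitely generated module of finite projective dimension, i.e.\ a nonzero perfect complex. Finiteness of $\cid_RM$ gives $M\in\A_C$ together with $\id_R(C\otimes_RM)=\cid_RM<\infty$. The goal is then to prove that $C$ itself has finite $C$-injective dimension: since $\cpd_RC=0$ trivially, once $\cid_RC<\infty$ is established the result of Sather-Wagstaff and Yassemi applies to $C$ and forces $R$ to be Gorenstein.

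To produce $\cid_RC<\infty$ I would cancel the perfect complex $N$. Working in the derived category and using the Tor-vanishing built into $M\in\A_C$, one has $C\otimes_RM\simeq(C\otimes^{\mathbf{L}}_RC)\otimes^{\mathbf{L}}_RN$ with $N$ perfect and nonzero. On the injective side the descent is clean: tensor-evaluation yields $\RHom_R(k,(C\otimes^{\mathbf{L}}_RC)\otimes^{\mathbf{L}}_RN)\simeq\RHom_R(k,C\otimes^{\mathbf{L}}_RC)\otimes^{\mathbf{L}}_RN$, and since $N\otimes^{\mathbf{L}}_Rk$ is a nonzero finite complex of $k$-vector spaces, boundedness of the left-hand side (equivalent to $\id_R(C\otimes_RM)<\infty$) forces boundedness of $\RHom_R(k,C\otimes^{\mathbf{L}}_RC)$, that is, $\id_R(C\otimes^{\mathbf{L}}_RC)<\infty$.

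The main obstacle is the remaining, purely homological, half of $\cid_RC<\infty$, namely that $C$ lies in $\A_C$; the crux is the vanishing $\Tor^R_i(C,C)=0$ for $i>0$, which is exactly what is needed to identify the complex $C\otimes^{\mathbf{L}}_RC$ with the module $C\otimes_RC$. In contrast to the injective-dimension descent, cancellation of a perfect complex on the Tor side can fail for an arbitrary complex, so this step must genuinely use the semidualizing structure. I expect to extract it from the known vanishing $\Tor^R_{>0}(C,M)=0$ by feeding the minimal free resolution of $N$ into the hyper-Tor spectral sequence $\Tor^R_a(\Tor^R_b(C,C),N)\Rightarrow\Tor^R_{a+b}(C,M)$ and analyzing the top corner, or equivalently by inducting on $\pd_RN$ so as to reduce to the base case $M\cong C^{\,n}$ covered by Sather-Wagstaff and Yassemi. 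Verifying this Tor-cancellation, and hence the full membership $C\in\A_C$, is the step I expect to require the most care.
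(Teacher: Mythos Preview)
Your setup through the tensor-evaluation isomorphism is exactly the paper's: localize, place $M$ in $\A_C\cap\B_C$, set $N=\Hom_R(C,M)$ with $\pd_RN<\infty$, identify $C\otimes_RM\simeq(C\otimes^{\bf L}_RC)\otimes^{\bf L}_RN$, and pull $N$ out of $\RHom_R(k,-)$. One small gloss: ``boundedness of $\RHom_R(k,C\otimes^{\bf L}_RC)$, that is, $\id_R(C\otimes^{\bf L}_RC)<\infty$'' is not automatic for an a~priori unbounded complex; the paper inserts the Foxby--Iyengar step to first deduce that $C\otimes^{\bf L}_RC$ is homologically bounded, and only then reads off finite injective dimension.

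The real divergence comes after this point, and here your plan has a genuine gap. You aim for $\cid_RC<\infty$, whose hard half is $C\in\A_C$, i.e.\ $\Tor^R_{>0}(C,C)=0$. Neither of your proposed mechanisms delivers this. In the spectral sequence $E^2_{a,b}=\Tor^R_a(\Tor^R_b(C,C),N)\Rightarrow\Tor^R_{a+b}(C,M)$ the top row $b=s:=\sup(C\otimes^{\bf L}_RC)$ receives differentials from lower rows, so vanishing of the abutment only yields $\Tor^R_p(T_s,N)=0$ (and $\Tor^R_{p-1}(T_s,N)=0$), which does not force $T_s:=\Tor^R_s(C,C)$ to vanish; already over a regular local ring one can have $\Tor^R_p(T,N)=0$ with $T\ne 0$. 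The induction on $\pd_RN$ fails for the same reason: from $0\to N'\to F\to N\to 0$ one gets $M'=C\otimes_RN'$ with smaller $\cpd$, but checking $M'\in\A_C$ requires $\Tor^R_i(C,C^{\operatorname{rk}F})=0$, which is precisely the unknown. So the ``main obstacle'' you flag is real, and the tools you propose do not surmount it.

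The paper sidesteps this obstacle entirely: it never proves $\Tor^R_{>0}(C,C)=0$ as an intermediate step. Instead it works with the \emph{complex} $D:=C\otimes^{\bf L}_RC$. After establishing that $D$ is bounded with $\id_RD<\infty$, it invokes Christensen's result that a semidualizing module is reflexive with respect to itself, giving $C\simeq\RHom_R(C,D)$; adjunction then yields $\RHom_R(D,D)\simeq\RHom_R(C,C)\simeq R$, so $D$ is a dualizing complex. Frankild--Sather-Wagstaff now forces $C\cong R$, whence $R\simeq D$ is dualizing and $R$ is Gorenstein. The Tor-vanishing you were chasing falls out \emph{a posteriori} from $C\cong R$, rather than being needed as input. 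If you want to keep your endpoint (apply Sather-Wagstaff--Yassemi to $C$), the clean fix is to replace the Tor-cancellation attempt by this complex-level argument.
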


\section{Proof of Theorem \ref{main}}

We denote by $\D(R)$ the derived category of $R$.
To prove our theorem, we give a lemma.

\begin{lem}\label{lem}
Let $R$ be a commutative noetherian ring.
Let $X,Y,Z$ be $R$-complexes.
Assume the following:
\begin{enumerate}[\rm (1)]
\item
$H_i(X)$ and $H_i(Z)$ are finitely generated for all $i\in\Z$, 
\item
$H_i(X)$ and $H_i(Z)$ are zero for all $i\ll 0$,
\item
$\pd_RZ<\infty$.
\end{enumerate}
Then there is a natural isomorphism
$$
\RHom_R(X,Y)\otimes_R^{\bf L}Z\cong\RHom_R(X,Y\otimes_R^{\bf L}Z).
$$
in $\D(R)$.
\end{lem}

\begin{proof}
There exist $R$-complexes
\begin{align*}
P & =(\cdots\to P_{a+1}\to P_a\to 0),\\
Q & =(0\to Q_b\to Q_{b-1}\to\cdots\to Q_{c+1}\to Q_c\to 0)
\end{align*}
isomorphic (in $\D(R)$) to $X$ and $Z$ respectively, such that $P_i$ and $Q_j$ are finitely generated projective $R$-modules for $i\ge a$ and $b\ge j\ge c$.
We have $\RHom_R(X,Y)\otimes_R^{\bf L}Z=\Hom_R(P,Y)\otimes_RQ$ and $\RHom_R(X,Y\otimes_R^{\bf L}Z)=\Hom_R(P,Y\otimes_RQ)$.
There is a natural homomorphism $\Hom_R(P,Y)\otimes_RQ\to\Hom_R(P,Y\otimes_RQ)$ of $R$-complexes; see \cite[(A.2.10)]{Cb}.
This homomorphism is an isomorphism by \cite[(2.7)]{CV}.
\end{proof}

Now we can prove our main theorem.

\begin{tpf}
Replacing $R$ with $R_\p$, we may assume that $R$ is local.
We denote by $k$ the residue field of $R$.
Note from \cite[(2.9)--(2.11)]{cdim} that $M$ is in both the Auslander class $\A_C(R)$ and the Bass class $\B_C(R)$, and that $\Hom_R(C,M)$ (respectively, $C\otimes_RM$) is a nonzero finitely generated $R$-module of finite projective (respectively, injective) dimension.
We have isomorphisms
\begin{align*}
C\otimes_RM & \cong C\otimes_R^{\bf L}M \\
& \cong C\otimes_R^{\bf L}(C\otimes_R^{\bf L}\Hom_R(C,M)) \\
& \cong (C\otimes_R^{\bf L}C)\otimes_R^{\bf L}\Hom_R(C,M)
\end{align*}
in $\D(R)$.
Using Lemma \ref{lem}, we get isomorphisms
\begin{align*}
\RHom_R(k,C\otimes_RM) & \cong \RHom_R(k,(C\otimes_R^{\bf L}C)\otimes_R^{\bf L}\Hom_R(C,M)) \\
& \cong \RHom_R(k,C\otimes_R^{\bf L}C)\otimes_R^{\bf L}\Hom_R(C,M).
\end{align*}
By \cite[(A.7.9)]{Cb}, we obtain:
\begin{align*}
\sup(\RHom_R(k,C\otimes_R^{\bf L}C)) & =\sup(\RHom_R(k,C\otimes_RM))-\sup(k\otimes_R^{\bf L}\Hom_R(C,M)) \\
& =-\depth_R(C\otimes_RM)-\pd_R(\Hom_R(C,M))\in\Z,\\
\inf(\RHom_R(k,C\otimes_R^{\bf L}C)) & =\inf(\RHom_R(k,C\otimes_RM))-\inf(k\otimes_R^{\bf L}\Hom_R(C,M)) \\
& =-\id_R(C\otimes_RM)\in\Z.
\end{align*}
Hence the $R$-complex $\RHom_R(k,C\otimes_R^{\bf L}C)$ is bounded, and so is $C\otimes_R^{\bf L}C$ by \cite[(2.5)]{FI}.
Thus we get $\id_R(C\otimes_R^{\bf L}C)=-\inf(\RHom_R(k,C\otimes_R^{\bf L}C))\in\Z$ by \cite[(A.5.7.4)]{Cb}.
It follows from \cite[(4.4) and (4.6)(a)]{Cp} that there is a natural isomorphism $C\cong\RHom_R(C,C\otimes_R^{\bf L}C)$, and so we have natural isomorphisms $\RHom_R(C\otimes_R^{\bf L}C,C\otimes_R^{\bf L}C)\cong\RHom_R(C,\RHom_R(C,C\otimes_R^{\bf L}C))\cong\RHom_R(C,C)\cong R$.
Therefore $C\otimes_R^{\bf L}C$ is a (semi)dualizing $R$-complex.
It follows from \cite[(3.2)]{FS} that $C$ is isomorphic to $R$.
Thus the dualizing $R$-complex $C\otimes_R^{\bf L}C$ is isomorphic to $R$, which concludes that $R$ is a Gorenstein ring.
\qed
\end{tpf}

\section{Foxby's theorem}

Applying Theorem \ref{main} to the semidualizing $R$-module $C=R$ for a local ring $R$, we immediately recover a well-known result of Foxby.

\begin{cor}\cite[(4.4)]{F}\label{cor}
Let $R$ be a commutative noetherian local ring.
Let $M$ be a nonzero finitely generated $R$-module with $\pd_RM<\infty$ and $\id_RM<\infty$.
Then $R$ is Gorenstein.
\end{cor}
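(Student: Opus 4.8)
The plan is to obtain this corollary as the special case $C=R$ of Theorem \ref{main}, so essentially all of the work lies in checking that the hypotheses specialize correctly. First I would recall that a free module of rank one is a semidualizing module: for $C=R$ the homothety map $R\to\Hom_R(R,R)$ is an isomorphism and $\Ext_R^i(R,R)=0$ for every $i>0$, so $C=R$ qualifies as a semidualizing $R$-module and Theorem \ref{main} is applicable with this choice.

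Next I would identify the two $C$-dimensions with their classical counterparts. Taking $C=R$, the tensor factor $C\otimes_RP_i=R\otimes_RP_i$ is just $P_i$, so the defining exact sequence for the $C$-projective dimension is precisely an ordinary finite projective resolution; hence $\cpd_RM=\pd_RM$. Dually, $\Hom_R(C,I^i)=\Hom_R(R,I^i)=I^i$, so the defining exact sequence for the $C$-injective dimension is an ordinary finite injective coresolution, giving $\cid_RM=\id_RM$. Consequently the hypotheses $\pd_RM<\infty$ and $\id_RM<\infty$ translate verbatim into $\cpd_RM<\infty$ and $\cid_RM<\infty$, so Theorem \ref{main} applies to $M$ with $C=R$.

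With these identifications Theorem \ref{main} yields that $R_\p$ is Gorenstein for every $\p\in\Supp_RM$, and it remains only to locate the maximal ideal $\m$ in the support. Because $M$ is nonzero and finitely generated over the local ring $R$, we have $\Supp_RM=V(\Ann_RM)$ with $\Ann_RM\subseteq\m$, so $\m\in\Supp_RM$ (equivalently $M_\m=M\neq0$). Specializing the conclusion of Theorem \ref{main} to $\p=\m$ therefore shows that $R=R_\m$ is Gorenstein. I do not anticipate any genuine obstacle: the entire content of the argument is the reduction just described, since all of the substantive homological work has already been carried out in the proof of Theorem \ref{main}.
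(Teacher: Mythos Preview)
Your proposal is correct and follows exactly the paper's approach: the corollary is obtained simply by specializing Theorem~\ref{main} to the semidualizing module $C=R$, noting that then $\cpd_RM=\pd_RM$ and $\cid_RM=\id_RM$, and observing that $\m\in\Supp_RM$ since $M$ is nonzero and finitely generated over the local ring $R$.
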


\begin{rem}
\begin{enumerate}[\rm (1)]
\item
Holm \cite{H} investigates existence of modules of finite Gorenstein projective and injective dimensions, and shows a result which also recovers Corollary \ref{cor}.
\item
Our method in the proof of Theorem \ref{main} actually gives a more simple proof of Corollary \ref{cor} than the proof due to Foxby.
In fact, let $R$ and $M$ be as in Corollary \ref{cor}.
Then we have
$$
\RHom_R(k,M)\cong\RHom_R(k,R\otimes_R^{\bf L}M)\cong\RHom_R(k,R)\otimes_R^{\bf L}M,
$$
which gives
\begin{align*}
\id_RR & =-\inf\RHom_R(k,R)\\
& =-\inf\RHom_R(k,M)+\inf(k\otimes_R^{\bf L}M)=\id_RM<\infty,
\end{align*}
namely, $R$ is Gorenstein.
\item
Let $R$ be a Cohen-Macaulay local ring with dualizing module $\omega$.
Then, applying Theorem \ref{main} to the semidualizing $R$-module $C=\omega$, we get the same result as Corollary \ref{cor}, because for an $R$-module $M$ one has $\opd_RM<\infty$ (respectively, $\oid_RM<\infty$) if and only if $\id_RM<\infty$ (respectively, $\pd_RM<\infty$); see \cite[(2.11)]{cdim}.
\end{enumerate}
\end{rem}

\begin{ac}
The authors are indebted to Lars Winther Christensen and Yuji Yoshino for their many useful and helpful comments and suggestions.
The authors also thank Osamu Iyama and the referee for their kind advice and comments.

\end{ac}


\end{document}